\documentclass{llncs}
\usepackage{amsmath,amssymb,amsbsy,amsfonts,latexsym,
               amsopn,amstext,amsxtra,euscript,amscd}

\newtheorem{thm}{Theorem}
\newtheorem{lem}[thm]{Lemma}
\newtheorem{cor}[thm]{Corollary}

\newtheorem{rem}[thm]{Remark}

\def\cB{{\mathcal B}}
\def\cC{{\mathcal C}}

\def\cH{{\mathcal H}}

\def\cGB{\mathcal{GB}}

\def\e{{\mathbf e}}

\def\uu{{\bf u}}

\def\xx{{\bf x}}
\def\yy{{\bf y}}
\def\zz{{\bf z}}

\def\00{{\bf 0}}
\def\11{{\bf 1}}
\def\+{\oplus}

\def\\{\cr}
\def\({\left(}
\def\){\right)}

\newcommand{\BBZ}{\mathbb{Z}}
\newcommand{\BBR}{\mathbb{R}}

\newcommand{\BBC}{\mathbb{C}}

\newcommand{\BBQ}{\mathbb{Q}}

\providecommand{\newoperator}[3]{%
  \newcommand*{#1}{\mathop{#2}#3}}

\newoperator{\FD}{\mathrm{FD}}{\nolimits}

\begin{document}
\title{Octal Generalized Boolean Functions}
\author{
Pantelimon~St\u anic\u a,
Thor Martinsen,
}
\institute{
$^1$ Department of Applied Mathematics\footnote{T.M. is a Ph.D student in Applied Mathematics at the Naval Postgraduate School.}\\
Naval Postgraduate School\\
Monterey, CA 93943--5216, USA\\
\email{\{tmartins,pstanica\}@nps.edu}
}
\date{\today}
\maketitle
\begin{abstract}
In this paper we characterize (octal) bent generalized Boolean functions defined on $\BBZ_2^n$ with values in $\BBZ_8$.
Moreover, we propose several constructions of such generalized bent functions for both $n$ even and $n$ odd.
\end{abstract}
{\bf Keywords:} Generalized Boolean functions; generalized bent functions; crosscorelation.

\section{Introduction}

Several generalizations of Boolean functions have been proposed in the
recent years and the effect of the Walsh--Hadamard transform on them has been
studied~\cite{KSW85,KAI07,ST09}. The natural generalizations
of bent functions in the Boolean case, namely generalized functions which have flat spectra
with respect to the Walsh--Hadamard
transform are of special interest.

Let the set of integers, real numbers and complex numbers
be denoted by $\BBZ$, $\BBR$ and $\BBC$, respectively. By $\BBZ_r$ we denote the ring of integers modulo $r$.
A function from $\BBZ_2^n$ to $\BBZ_2$ is said to be
a Boolean function on $n$ variables and the set of all such functions is denoted by
$\cB_n$. A function from $\BBZ_2^n$ to
$\BBZ_q$ ($q$ a positive integer) is said to be a {\em generalized Boolean function} on $n$
variables~\cite{ST09}. We denote the set of such functions by
  $\cGB_n^q$. We will consider these functions with an emphasis on $q=8$.

Any element $\xx \in \BBZ_2^n$ can be written as
an $n$-tuple
$(x_n, \ldots, x_1)$, where $x_i \in \BBZ_2$ for all $i = 1, \ldots, n$.
The addition over $\mathbb{Z}$, $\mathbb{R}$ and $\mathbb{C}$ is
denoted by `$+$'. The addition over
$\BBZ_2^n$ for all $n\geq 1$, is denoted by  $\oplus$.
Addition modulo $q$ is denoted by `$+$' and is understood from the context.
If $\xx=(x_n, \ldots, x_1)$ and $\yy=(y_n, \ldots, y_1)$ are two elements of $\BBZ_2^n$,
we define the scalar (or  inner) product,
by
$
\xx \cdot \yy = x_ny_n\oplus\cdots\oplus  x_2y_2 \oplus x_1y_1.
$
The cardinality of the set $S$ is denoted by $|S|$.  If $z=a+b\,\imath \in \mathbb{C}$, then
$|z|=\sqrt{a^2+b^2}$ denotes the absolute value of $z$, and $\overline{z}=a-b\,\imath$ denotes
the complex conjugate of $z$, where $\imath^2=-1$, and $a,b\in\BBR$. The conjugate of a bit $b$ will also be denoted by $\bar b$.

The (normalized) {\em Walsh--Hadamard transform} of $f \in \cB_n$ at
any point $\uu \in \BBZ_2^n$ is defined by
\[
W_f(\uu) = 2^{-\frac{n}{2}} \sum_{\xx \in \BBZ_2^n} (-1) ^{f(\xx)} (-1)^{\uu \cdot \xx}.
\]
A function $f \in \cB_n$, where $n$ is even, is a {\em bent function}  if
$|W_f(\uu)|=1$ for all $\uu \in \BBZ_2^n$. In case $n$ is even a function $f \in \cB_n$ is said to be
a {\em semibent} function if and only if, $|W_f(\uu)| \in \{ 0, \sqrt{2}\}$
for all $\uu \in \BBZ_2^n$.

{\begin{sloppypar}
The sum $$C_{f,g}(\zz)=\sum_{\xx \in \BBZ_2^n} (-1) ^{f(\xx)  \oplus g(\xx \+ \zz)}$$
is  the {\em crosscorrelation} of $f$ and
$g$ at $\zz$.
The {\em autocorrelation} of $f \in \cB_n$ at $\uu \in \BBZ_2^n$
is $C_{f,f}(\uu)$ above, which we denote by $C_f(\uu)$.
\end{sloppypar}
}

Let $\zeta=e^{2\pi \imath/q}$ be the $q$-primitive root of unity.
The (generalized) {\em Walsh--Hadamard transform} of $f \in \cGB_n^q$ at
any point $\uu \in \BBZ_2^n$ is the complex valued function defined by
\[
\cH_f(\uu) = 2^{-\frac{n}{2}} \sum_{\xx \in \BBZ_2^n} \zeta^{f(\xx)} (-1)^{\uu \cdot \xx}.
\]
A function $f \in \mathcal{B}_n$ is a {\em generalized bent function} ({\em gbent}, for short) if
$|\cH_f(\uu)|=1$ for all $\uu \in \BBZ_2^n$.

{\begin{sloppypar}
The sum $$\cC_{f,g}(\zz)=\sum_{\xx \in \BBZ_2^n} \zeta ^{f(\xx)  - g(\xx \+ \zz)}$$
is  the {\em crosscorrelation} of $f$ and
$g$ at $\zz$.
The {\em autocorrelation} of $f \in \cGB_n^q$ at $\uu \in \BBZ_2^n$
is $\cC_{f,f}(\uu)$ above, which we denote by $\cC_f(\uu)$.
\end{sloppypar}
}

When $2^{h-1}<q\leq 2^h$, given any $f \in \cGB_n^q$ we associate a unique sequence of Boolean
 functions $a_i\in \cB_n$ ($i=0,1,\ldots,h-1$) such that
\begin{equation}
\label{eq0.1}
f(\xx) = a_0(\xx) + 2 a_1(\xx)+\cdots+2^{h-1} a_{h-1}(\xx), \mbox{ for all } \xx \in \BBZ_2^n.
\end{equation}

\section{Properties of Walsh--Hadamard transform on generalized Boolean functions }
\label{propnega}

 We gather in the current section several properties of the Walsh--Hadamard transform and its generalized counterpart~\cite{GSS12}.

\begin{thm} We have:
\begin{itemize}
\item[$(i)$]
Let $f \in \cB_n$. Then, the inverse of the Walsh--Hadamard transform is
\begin{equation*}
\label{eq1.00}
(-1) ^{f(\yy)} = 2^{-\frac{n}{2}} \sum_{\uu \in \BBZ_2^n} W_f(\uu)(-1)^{\uu \cdot \yy}.
\end{equation*}
\item[$(ii)$]
If $f,g\in \cB_n$, then
\begin{equation*}
\label{eq3.00}
\begin{split}
\sum_{\uu \in \BBZ_2^n} C_{f,g}(\uu)(-1)^{\uu \cdot \xx} = 2^n W_f(\xx)W_g(\xx), \\
C_{f,g}(\uu) = \sum_{\xx \in \BBZ_2^n}W_f(\xx)W_g(\xx) (-1)^{\uu \cdot \xx}.
\end{split}
\end{equation*}
\item[$(iii)$]
Taking the particular case $f = g$ we obtain
\begin{equation*}
\label{eq4.00}
C_f(\uu) = \sum_{\xx \in \BBZ_2^n}W_f(\xx)^2 (-1)^{\uu \cdot \xx}.
\end{equation*}
\item[$(iv)$]
A Boolean function $f$ is  bent if and only if
$C_f(\uu) = 0$ at all nonzero points $\uu \in \BBZ_2^n$.
\item[$(v)$]
For any $f \in \cB_n$, the Parseval's identity holds
\begin{equation*}
\label{eq6.00}
\sum_{\xx \in \BBZ_2^n}W_f(\xx)^2 = 2^n.
\end{equation*}
\end{itemize}
\end{thm}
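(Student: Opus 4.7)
The plan is to prove (i) directly and then use it as the engine for (ii)--(v), since all five claims are standard Fourier-analytic identities on $\BBZ_2^n$ that reduce to the orthogonality relation
\[
\sum_{\uu \in \BBZ_2^n} (-1)^{\uu \cdot (\xx \oplus \yy)} = 2^n \,\delta_{\xx,\yy}.
\]
For (i), I would substitute the definition of $W_f(\uu)$ into the right-hand side, interchange the order of summation, and apply the orthogonality identity above to collapse the inner sum to $2^n \delta_{\xx,\yy}$; the normalizing factor $2^{-n/2}$ from both occurrences combines with $2^n$ to give exactly $(-1)^{f(\yy)}$.

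For (ii), the cleanest route is to start from the right-hand side: expand $W_f(\xx)W_g(\xx)$ as a double sum over $\aa, \bb \in \BBZ_2^n$, multiply by $(-1)^{\uu \cdot \xx}$, and swap the summation over $\xx$ with the sums over $\aa, \bb$. The inner sum in $\xx$ then produces $2^n \delta_{\aa \oplus \bb, \uu}$ by orthogonality, which forces $\bb = \aa \oplus \uu$; after cancellation this is precisely $C_{f,g}(\uu)$. The first identity in (ii) is then the inverse transform statement of the second, obtained by applying the Walsh--Hadamard inversion of (i) coordinate-wise.

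Part (iii) is the specialization $f = g$ of (ii). Parseval's identity (v) drops out of (iii) by setting $\uu = \mathbf{0}$ and using $C_f(\mathbf{0}) = \sum_\xx ((-1)^{f(\xx)})^2 = 2^n$. The nontrivial step is (iv): for the forward direction, if $f$ is bent then $W_f(\xx)^2 = 1$ for every $\xx$, so by (iii) we have $C_f(\uu) = \sum_\xx (-1)^{\uu \cdot \xx} = 2^n \delta_{\uu, \mathbf{0}}$, which vanishes for $\uu \neq \mathbf{0}$. For the converse, I would invert (iii): the relation there says $W_f^2$ is (up to normalization) the Walsh--Hadamard transform of $C_f$, so if $C_f(\uu) = 0$ for all $\uu \neq \mathbf{0}$ then $W_f(\xx)^2$ is constant in $\xx$, and Parseval (which is already proved from (iii) independently) fixes that constant to be $1$.

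The main obstacle is essentially bookkeeping: keeping track of the $2^{-n/2}$ normalizations so that (ii) comes out with the factor $2^n$ on the right-hand side and (v) comes out with $2^n$ rather than $1$. Aside from that, every step is an instance of the orthogonality relation and a sum swap, so the proof is a short chain of direct calculations with no genuinely hard step.
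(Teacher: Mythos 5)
Your proposal is correct: every part reduces, as you say, to the orthogonality relation $\sum_{\uu}(-1)^{\uu\cdot(\xx\oplus\yy)}=2^n\delta_{\xx,\yy}$ plus a sum swap, and the normalizations check out (e.g.\ the converse in $(iv)$ follows from the first identity of $(ii)$ with $f=g$, giving $2^nW_f(\xx)^2=C_f(\mathbf{0})=2^n$). The paper itself gives no proof --- it states these as standard facts with a citation --- so there is nothing to contrast with; your argument is the standard one and fills the gap correctly.
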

For more properties of these transforms
and Boolean functions, the interested reader can consult~\cite{CH1,CH2,PS09}.

The properties of the Walsh--Hadamard transform on generalized Boolean functions are
similar to the Boolean function case.
\begin{thm}
We have:
\begin{itemize}
\item[$(i)$] Let $f \in \cGB_n^q$.
The inverse of the Walsh--Hadamard transform is given by
\begin{equation*}
\label{eq1}
\zeta ^{f(\yy)} = 2^{-\frac{n}{2}} \sum_{\uu \in \BBZ_2^n} \cH_f(\uu)(-1)^{\uu \cdot \yy}.
\end{equation*}
Further, $\cC_{f,g}(\uu) = \overline{\cC_{g,f}(\uu)}$, for all $\uu \in \BBZ_2^n$, which implies that $\cC_f(\uu)$ is always real.
\item[$(ii)$] If $f,g\in\cGB_n^q$, then
\begin{equation*}
\label{eq3}
\begin{split}
\sum_{\uu \in \BBZ_2^n}\cC_{f,g}(\uu)(-1)^{\uu \cdot \xx} = 2^n \cH_f(\xx)\overline{\cH_g(\xx)}, \\
\cC_{f,g}(\uu) = \sum_{\xx \in \BBZ_2^n}\cH_f(\xx)\overline{\cH_g(\xx)} (-1)^{\uu \cdot \xx}.
\end{split}
\end{equation*}
\item[$(iii)$] Taking the particular case $f = g$ we obtain
\begin{equation}
\label{eq4}
\cC_f(\uu) = \sum_{\xx \in \BBZ_2^n}|\cH_f(\xx)|^2 (-1)^{\uu \cdot \xx}.
\end{equation}
\item[$(iv)$]
If $f\in\cGB_n^q$, then $f$ is gbent if and only if
\begin{equation}
\label{eq5}
\cC_f(\uu) = \begin{cases}
2^n & \mbox{ if } \uu = 0,\\
0 & \mbox{ if } \uu \ne 0.
\end{cases}
\end{equation}
\item[$(v)$]
Moreover, the (generalized) Parseval's identity holds
\begin{equation}
\label{eq6}
\sum_{\xx \in \BBZ_2^n}|\cH_f(\xx)|^2 = 2^n.
\end{equation}
\end{itemize}
\end{thm}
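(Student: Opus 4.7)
The plan is to derive all five parts by mimicking the classical Boolean proofs, replacing $(-1)^{f(\xx)}$ by $\zeta^{f(\xx)}$ and tracking complex conjugates carefully. The only new ingredient compared to the Boolean case is the appearance of conjugation, which is why $\overline{\cH_g(\xx)}$ shows up in (ii) instead of a plain $\cH_g(\xx)$.

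For (i), I would plug the definition of $\cH_f(\uu)$ into the right-hand side of the inversion formula, swap the two sums, and invoke the orthogonality relation $\sum_{\uu\in\BBZ_2^n}(-1)^{\uu\cdot(\xx\+\yy)} = 2^n\,\delta_{\xx,\yy}$; this collapses the double sum to $\zeta^{f(\yy)}$. The identity $\cC_{f,g}(\uu)=\overline{\cC_{g,f}(\uu)}$ comes from the substitution $\xx\mapsto\xx\+\uu$ in the definition of $\cC_{g,f}(\uu)$ together with $\overline{\zeta^{k}}=\zeta^{-k}$, and setting $g=f$ shows $\cC_f(\uu)$ equals its conjugate, hence is real.

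Part (ii) is the key computation: I would expand $\cH_f(\xx)\overline{\cH_g(\xx)}$ using the definition, giving a triple sum in $\xx,\yy,\zz$, then perform the $\xx$-summation first using the same orthogonality relation. The resulting $\delta$-term forces $\yy\+\zz=\uu$ (or the dual vector in the other direction), which produces exactly $\cC_{f,g}(\uu)$. The two displayed identities are then equivalent via the inverse Walsh--Hadamard transform applied to the function $\uu\mapsto\cC_{f,g}(\uu)$. Part (iii) is an immediate specialization $g=f$, since $\cH_f(\xx)\overline{\cH_f(\xx)}=|\cH_f(\xx)|^2$.

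For (iv), the forward direction is easy: if $|\cH_f(\xx)|^2=1$ for all $\xx$, then (iii) yields $\cC_f(\uu)=\sum_{\xx}(-1)^{\uu\cdot\xx}$, which equals $2^n$ when $\uu=0$ and vanishes otherwise by orthogonality. Conversely, if $\cC_f$ takes the values in (\ref{eq5}), then Fourier-inverting (\ref{eq4}) over $\BBZ_2^n$ (i.e.\ applying the Walsh--Hadamard transform to both sides) recovers $|\cH_f(\xx)|^2=1$ for every $\xx$. Finally (v) is obtained by setting $\uu=\00$ in (iii), since $\cC_f(\00)=\sum_{\xx}\zeta^{f(\xx)-f(\xx)}=2^n$. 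The only mildly delicate step is the conjugation bookkeeping in (ii); once that is handled cleanly, parts (iii)--(v) fall out as corollaries with no further work.
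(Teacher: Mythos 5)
Your proposal is correct. Note that the paper itself gives no proof of this theorem at all---it is stated as a collection of known properties and deferred to the reference [GSS12]---so there is no in-paper argument to compare against; your derivation is the standard discrete Fourier-analytic one and every step checks out: the orthogonality relation $\sum_{\uu}(-1)^{\uu\cdot(\xx\+\yy)}=2^n\delta_{\xx,\yy}$ handles (i) and (ii), the substitution $\xx\mapsto\xx\+\uu$ together with $\overline{\zeta^k}=\zeta^{-k}$ gives $\cC_{f,g}(\uu)=\overline{\cC_{g,f}(\uu)}$, and (iii)--(v) follow as the specializations you describe (in particular $\cC_f(\00)=2^n$ gives Parseval).
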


\section{The Walsh--Hadamard Transform on components}

Let $\zeta=\e^{2\pi\imath/q}$ be  a $q$-primitive root of unity.
Let $f$ be written as $f(\xx)=\sum_{i=0}^{h-1} a_i(\xx) 2^i$.
For brevity, we use the notations $\zeta_i:=\zeta^{2^i}$.
It is easy to see that, for $s\in\BBZ_2$, we have
\begin{equation}
\label{eq-zs}
z^s=\frac{1+(-1)^s}{2}+\frac{1-(-1)^s}{2}z,
\end{equation}
 and so, we have the identities
$\zeta_i^{a_i(\xx)}=\frac{1}{2}\left(A_i+A_i'\zeta_i \right)$,
where $A_i=1+(-1)^{a_i(\xx)}$ and $A_i'=1-(-1)^{a_i(\xx)}$, $\bar I=\{0,1,\ldots,h-1\}\setminus I$.

The Walsh--Hadamard coefficients of $f$ are
\begin{eqnarray*}
&&2^{n/2} \cH_f(\uu)
=\sum_\xx \zeta^{f(\xx)}(-1)^{\uu\cdot \xx}=\sum_\xx \zeta^{\sum_{i=0}^{h-1} a_i(\xx) 2^i}(-1)^{\uu\cdot \xx}\\
&=& \sum_\xx  (-1)^{\uu\cdot \xx} \prod_{i=0}^{h-1} \left(\zeta^{2^i} \right)^{a_i(\xx)}\\
&=&\sum_\xx  (-1)^{\uu\cdot \xx} \prod_{i=0}^{h-1} \frac{1}{2}\left(1+(-1)^{a_i(\xx)}+(1-(-1)^{a_i(\xx)})\zeta_i \right)\\
&=& 2^{-h} \sum_\xx  (-1)^{\uu\cdot \xx} \sum_{I\subseteq \{0,\ldots,h-1\}}
 \prod_{i\in I,j \in \bar I} \zeta_i A_i' A_j\\
 &=& 2^{-h} \sum_\xx  (-1)^{\uu\cdot \xx} \sum_{I\subseteq \{0,\ldots,h-1\}}\zeta^{\sum_{i\in I} 2^i}
 \prod_{i\in I,j \in\bar I}  A_i' A_j\\
 &=& 2^{-h} \sum_\xx  (-1)^{\uu\cdot \xx} \sum_{I\subseteq \{0,\ldots,h-1\}} \zeta^{\sum_{i\in I} 2^i}
 \sum_{J\subseteq I,K\subseteq \bar I}
  (-1)^{|J|} (-1)^{\sum_{j\in J}a_j(\xx)\+\sum_{k\in K}a_k(\xx)}\\
 &=& 2^{-h} \sum_{I\subseteq \{0,\ldots,h-1\}} \zeta^{\sum_{i\in I} 2^i}\sum_{J\subseteq I,K\subseteq \bar I}
  (-1)^{|J|} \sum_\xx  (-1)^{\uu\cdot \xx} (-1)^{\sum_{\ell\in J\cup K}a_\ell(\xx)},
\end{eqnarray*}
and so, we obtain the next result.
\begin{thm}
The Walsh--Hadamard transform of $f:\BBZ_2^n\to \BBZ_q$, $2^{h-1}<q\leq 2^h$,
where $f(\xx)=\sum_{i=0}^{h-1} a_i(\xx) 2^i$, $a_i\in\cB_n$ is given by
\begin{eqnarray*}
\cH_f(\uu)= 2^{-h} \sum_{I\subseteq \{0,\ldots,h-1\}} \zeta^{\sum_{i\in I} 2^i}\sum_{J\subseteq I,K\subseteq \bar I}
  (-1)^{|J|} W_{\sum_{\ell\in J\cup K}a_\ell(\xx)}(\uu).
\end{eqnarray*}
\end{thm}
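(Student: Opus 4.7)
The plan is to evaluate $\cH_f(\uu)$ by substituting the base-$2$ decomposition $f(\xx)=\sum_{i=0}^{h-1}a_i(\xx)2^i$ into the definition and expanding systematically. First I would factor $\zeta^{f(\xx)}=\prod_{i=0}^{h-1}\zeta_i^{a_i(\xx)}$ and, using identity~(\ref{eq-zs}) with $z=\zeta_i$, $s=a_i(\xx)$, rewrite each factor as $\tfrac12(A_i+A_i'\zeta_i)$, where $A_i=1+(-1)^{a_i(\xx)}$ and $A_i'=1-(-1)^{a_i(\xx)}$.

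Next I would expand the product $\prod_{i=0}^{h-1}\tfrac12(A_i+A_i'\zeta_i)$ by, for each $i$, choosing either the $A_i'\zeta_i$ summand or the $A_i$ summand. These choices are encoded by a subset $I\subseteq\{0,\ldots,h-1\}$ (the coordinates taking the $\zeta_i$-branch), and since $\prod_{i\in I}\zeta_i=\zeta^{\sum_{i\in I}2^i}$, this produces the $2^{-h}$-weighted expression $\sum_{I}\zeta^{\sum_{i\in I}2^i}\prod_{i\in I}A_i'\prod_{j\in\bar I}A_j$, with $\bar I$ the complement of $I$.

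The essential step is a second binomial expansion of the remaining product: each $A_i'$ and $A_j$ is itself a sum of two terms, so expanding across $I$ selects a subset $J\subseteq I$ of indices where the $-(-1)^{a_i(\xx)}$-term is picked, and expanding across $\bar I$ selects $K\subseteq\bar I$ where the $(-1)^{a_j(\xx)}$-term is picked. The combined contribution is $(-1)^{|J|}(-1)^{\sum_{\ell\in J\cup K}a_\ell(\xx)}$. After switching the order of summation to move the sum over $\xx$ inside, the innermost sum $\sum_{\xx}(-1)^{\uu\cdot\xx}(-1)^{\sum_{\ell\in J\cup K}a_\ell(\xx)}$ is exactly $2^{n/2}W_{\sum_{\ell\in J\cup K}a_\ell}(\uu)$ by the definition of the Walsh--Hadamard transform on $\cB_n$, once one identifies the integer sum in the exponent of $-1$ with the corresponding $\oplus$-sum of Boolean functions.

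Dividing through by $2^{n/2}$ then yields the claimed formula. The calculation is entirely algebraic and does not depend on any structural hypothesis on $f$; the only care needed lies in the bookkeeping of the two nested subset expansions, and in keeping track of which factor contributes the sign $(-1)^{|J|}$ versus the $a_\ell$-dependent exponent. That double-layered expansion, rather than any analytic ingredient, is what I expect to be the main subtlety to present cleanly.
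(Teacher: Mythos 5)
Your proposal is correct and follows essentially the same route as the paper: factor $\zeta^{f(\xx)}=\prod_i\zeta_i^{a_i(\xx)}$, apply identity~\eqref{eq-zs} to each factor, perform the two nested subset expansions over $I$ and then over $J\subseteq I$, $K\subseteq\bar I$, and finally swap the summation order to recognize the inner sum over $\xx$ as $2^{n/2}W_{\oplus_{\ell\in J\cup K}a_\ell}(\uu)$. The bookkeeping you identify as the main subtlety is exactly what the paper's displayed chain of equalities carries out.
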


In the next section we will redo some of these calculations, for the particular case $q=8$, which will
allow us to completely describe the generalized bent Boolean functions in that case.

\section{A Characterization of Generalized Bent Functions in $\BBZ_8$}

In this section we extend the result of Sol\'e and Tokareva~\cite{ST09} to generalized Boolean functions
from $\BBZ_2^n$ into $\BBZ_8$.
Let $\zeta=\e^{2\pi\imath/8}=\frac{\sqrt{2}}{2}(1+\imath)$ be  the 8-primitive root of unity.
  Every  function $f:\BBZ_2^n\to \BBZ_8$ can be written as
\begin{equation}
\label{f_z8}
f(\xx)=a_0(\xx)+a_1(\xx)2+a_2(\xx) 2^2,
\end{equation}
where $a_i(\xx)$ are Boolean functions, and $`+'$ is the addition modulo 8.
We prove the next lemma, which gives the connection between Walsh--Hadamard transforms of $f$ and it components
as in~\eqref{f_z8}.
\begin{lem}
Let $f\in\cGB_n^8$ as in \eqref{f_z8}. Then,
\[
4\cH_f(\uu)=\alpha_0 W_{a_2}(\uu)+\alpha_1 W_{a_0\oplus a_2}(\uu)+\alpha_2 W_{a_1\oplus a_2}(\uu)+\alpha_3 W_{a_0\oplus a_1\oplus a_2}(\uu),
\]
where $\alpha_0=1+(1+\sqrt{2})\imath$, $\alpha_1=1+(1-\sqrt{2})\imath$, $\alpha_2=1+\sqrt{2}-\imath$, $\alpha_3=1-\sqrt{2}-\imath$.
\end{lem}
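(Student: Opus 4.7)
The plan is to specialize the computation that was just carried out for general $q$ to the case $q=8$, $h=3$, and then repackage the resulting sum in a convenient form. The guiding idea is that, since $\zeta^2=\imath$ and $\zeta^4=-1$, we can write
\[
\zeta^{f(\xx)}=\zeta^{a_0(\xx)}\,\imath^{a_1(\xx)}\,(-1)^{a_2(\xx)},
\]
apply the elementary identity \eqref{eq-zs} to the first two factors (leaving the third alone, since it is already a $\pm1$-valued quantity), and then pull the factor $(-1)^{a_2(\xx)}$ back inside the $\pm1$-exponents so that every term becomes a Walsh sum of an XOR of the $a_i$'s.

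Concretely, I would first substitute $\zeta^{a_0}=\tfrac12(A_0+A_0'\zeta)$ and $\imath^{a_1}=\tfrac12(A_1+A_1'\imath)$ (with $A_i,A_i'$ as defined before the theorem), multiply them out to obtain four terms
\[
\zeta^{a_0}\imath^{a_1}=\tfrac14\bigl(A_0A_1+A_0A_1'\,\imath+A_0'A_1\,\zeta+A_0'A_1'\,\zeta\imath\bigr),
\]
and then expand each product $A_\varepsilon A_\delta'$ via $(1\pm(-1)^{a_0})(1\pm(-1)^{a_1})=1\pm(-1)^{a_0}\pm(-1)^{a_1}\pm(-1)^{a_0\oplus a_1}$. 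Multiplying by the outstanding $(-1)^{a_2(\xx)}$ turns each $(-1)^{a_{i_1}\oplus\cdots\oplus a_{i_k}}$ into $(-1)^{a_{i_1}\oplus\cdots\oplus a_{i_k}\oplus a_2}$, so the four subsets $\{a_2\},\{a_0\oplus a_2\},\{a_1\oplus a_2\},\{a_0\oplus a_1\oplus a_2\}$ are exactly the components that survive (the ``constant $1$'' terms cancel against their paired $(-1)^{a_2}$ terms only after we later sum against $(-1)^{\uu\cdot\xx}$, but the bookkeeping is clean because every $A_\varepsilon A_\delta'$ is a sum of four $\pm1$-exponentials).

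Next I would multiply through by $(-1)^{\uu\cdot\xx}$, sum over $\xx\in\BBZ_2^n$, and divide by $2^{n/2}$, so each of the sixteen $\pm(-1)^{(\cdot)}$ terms becomes a $\pm W_{(\cdot)}(\uu)$. Collecting the coefficient in front of each of $W_{a_2}$, $W_{a_0\oplus a_2}$, $W_{a_1\oplus a_2}$, $W_{a_0\oplus a_1\oplus a_2}$ yields
\[
4\,\cH_f(\uu)=(1+\imath+\zeta+\zeta\imath)W_{a_2}+(1+\imath-\zeta-\zeta\imath)W_{a_0\oplus a_2}+(1-\imath+\zeta-\zeta\imath)W_{a_1\oplus a_2}+(1-\imath-\zeta+\zeta\imath)W_{a_0\oplus a_1\oplus a_2}.
\]
Finally, I would substitute $\zeta=\tfrac{\sqrt2}{2}(1+\imath)$, so that $\zeta+\zeta\imath=\sqrt2\,\imath$ and $\zeta-\zeta\imath=\sqrt2$, and read off that the four coefficients equal $\alpha_0=1+(1+\sqrt2)\imath$, $\alpha_1=1+(1-\sqrt2)\imath$, $\alpha_2=1+\sqrt2-\imath$, $\alpha_3=1-\sqrt2-\imath$, matching the statement.

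The only real obstacle is bookkeeping: keeping the sixteen signed $(-1)^{(\cdot)}$ terms straight and correctly identifying which subset of the four Boolean XOR-combinations each one contributes to. The complex-coefficient simplification at the end is routine once $\zeta+\zeta\imath$ and $\zeta-\zeta\imath$ are computed.
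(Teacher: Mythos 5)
Your proof is correct and follows essentially the same route as the paper: write $\zeta^{f}=\zeta^{a_0}\imath^{a_1}(-1)^{a_2}$, expand the first two factors via \eqref{eq-zs}, absorb $(-1)^{a_2}$ into each $\pm1$-exponential, sum against $(-1)^{\uu\cdot\xx}$, and read off the coefficients $1\pm\imath\pm\zeta\pm\zeta\imath$, which simplify to the $\alpha_i$ using $\zeta+\zeta\imath=\sqrt2\,\imath$ and $\zeta-\zeta\imath=\sqrt2$. The only blemish is the parenthetical remark about constant terms ``cancelling'' --- nothing cancels; the constant term simply pairs with $(-1)^{a_2}$ to produce the $W_{a_2}$ contribution --- but this does not affect the computation, whose displayed coefficients are exactly right.
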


\begin{proof}
We compute
\begin{eqnarray}
\label{eq_hf}
2^{n/2} \cH_f(\uu)&=&\sum_{\xx\in\BBZ_2^n} \zeta^{f(\xx)} (-1)^{\uu\cdot \xx}\nonumber\\
&=&\sum_{\xx\in\BBZ_2^n} \zeta^{a_0(\xx)+a_a(\xx)2+a_2(\xx) 2^2} (-1)^{\uu\cdot \xx}\\
&=& \sum_{\xx\in\BBZ_2^n} \zeta^{a_0(\xx)} \imath^{a_1(\xx)}  (-1)^{a_2(\xx)\+\uu\cdot \xx}.\nonumber
\end{eqnarray}
Use  formula~\eqref{eq-zs} with $z=\imath$ and $z=\zeta$ in equation \eqref{eq_hf}, and obtain
\begin{eqnarray*}
\label{eq_hf2}
2^{n/2} \cH_f(\uu)&=&\sum_{\xx\in\BBZ_2^n} \left( \frac{1+(-1)^{a_0(\xx)}}{2}+\frac{1-(-1)^{a_0(\xx)}}{2}\zeta \right)\\
&& \qquad\cdot \left(  \frac{1+(-1)^{a_1(\xx)}}{2}+\frac{1-(-1)^{a_1(\xx)}}{2}\imath \right)(-1)^{a_2(\xx)\+\uu\cdot \xx}\\
&=& \frac{1}{4}  \sum_{\xx\in\BBZ_2^n}(-1)^{a_2(\xx)\+\uu\cdot \xx}
\left(
1+(1+\sqrt{2})\imath+(1+(1-\sqrt{2})\imath)(-1)^{a_0(\xx)}\right.\\
&& \quad\left.+(1+\sqrt{2}-\imath)(-1)^{a_1(\xx)}+
(1-\sqrt{2}-\imath)(-1)^{a_0(\xx)}(-1)^{a_1(\xx)}
   \right)\\
&=& \frac{1}{4}  \sum_{\xx\in\BBZ_2^n}
\left(
\alpha_0 (-1)^{a_2(\xx)\+\uu\cdot \xx} +\alpha_1 (-1)^{a_0(\xx)\+a_2(\xx)\+\uu\cdot \xx}\right.\\
&&\qquad\left. +
\alpha_2 (-1)^{a_1(\xx)\+a_2(\xx)\+\uu\cdot \xx} +\alpha_3(-1)^{a_0(\xx)\+a_1(\xx)\+a_2(\xx)\+\uu\cdot \xx}
\right),
\end{eqnarray*}
from which we derive our result.
\qed
\end{proof}

\begin{cor}
\label{cor-norm}
With the notations of the previous lemma, we have
\begin{equation}
\label{eq-norm}
4\sqrt{2}|\cH_f(\uu)|^2=W^2-X^2+2XY+Y^2-2WZ-Z^2+\sqrt{2}(W^2+X^2+Y^2+Z^2),
\end{equation}
where, we use for brevity, $W:=W_{a_2}(\uu)$, $X:=W_{a_0\oplus a_2}(\uu)$, $Y:= W_{a_1\oplus a_2}(\uu)$,
$Z:=W_{a_0\oplus a_1\oplus a_2}(\uu)$.
\end{cor}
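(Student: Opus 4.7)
The plan is to apply the preceding lemma directly: it expresses $4\cH_f(\uu)$ as a complex linear combination $\alpha_0 W+\alpha_1 X+\alpha_2 Y+\alpha_3 Z$ of the four (real) Walsh--Hadamard values, so I will compute the squared modulus of that combination and then simply rescale. Writing $16\,|\cH_f(\uu)|^2 = (\alpha_0 W+\alpha_1 X+\alpha_2 Y+\alpha_3 Z)\overline{(\alpha_0 W+\alpha_1 X+\alpha_2 Y+\alpha_3 Z)}$ and exploiting that $W,X,Y,Z\in\BBR$ yields
\[
16\,|\cH_f(\uu)|^2=\sum_{i=0}^{3}|\alpha_i|^2(\cdot)^2+\sum_{0\le i<j\le 3} 2\,\mathrm{Re}(\alpha_i\overline{\alpha_j})\,(\text{product of the two coefficients}).
\]

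The next step is a short bookkeeping calculation of the ten scalars $|\alpha_i|^2$ and $\mathrm{Re}(\alpha_i\overline{\alpha_j})$. I expect the diagonal entries to give $|\alpha_0|^2=|\alpha_2|^2=4+2\sqrt{2}$ and $|\alpha_1|^2=|\alpha_3|^2=4-2\sqrt{2}$. For the cross terms the key (and slightly fortunate) phenomenon is that \emph{most of them vanish}: direct expansion shows $\mathrm{Re}(\alpha_0\overline{\alpha_1})=\mathrm{Re}(\alpha_0\overline{\alpha_2})=\mathrm{Re}(\alpha_1\overline{\alpha_3})=\mathrm{Re}(\alpha_2\overline{\alpha_3})=0$, while only $\mathrm{Re}(\alpha_0\overline{\alpha_3})=-2\sqrt{2}$ and $\mathrm{Re}(\alpha_1\overline{\alpha_2})=2\sqrt{2}$ survive. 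That leaves precisely two mixed terms, namely a $-4\sqrt{2}\,WZ$ and a $+4\sqrt{2}\,XY$, which matches the asymmetric shape of the target identity.

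Assembling these ingredients gives
\[
16\,|\cH_f(\uu)|^2=(4+2\sqrt{2})(W^2+Y^2)+(4-2\sqrt{2})(X^2+Z^2)+4\sqrt{2}\,(XY-WZ).
\]
Multiplying both sides by $\sqrt{2}/4$ converts $(4\pm2\sqrt{2})/4\cdot\sqrt{2}$ into $\sqrt{2}\pm1$ and the $4\sqrt{2}/4\cdot\sqrt{2}$ into $2$; grouping the $\sqrt{2}$-terms together then reproduces exactly the stated formula
\[
4\sqrt{2}\,|\cH_f(\uu)|^2=W^2-X^2+2XY+Y^2-2WZ-Z^2+\sqrt{2}(W^2+X^2+Y^2+Z^2).
\]

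The only nontrivial step is the cross-term computation: one has to be careful with signs when conjugating $\alpha_2,\alpha_3$ (whose imaginary parts are $-1$) and to keep track of $\imath^2=-1$. Once those ten inner products are in hand, the rest is routine algebra and a single rescaling, so I do not anticipate any genuine difficulty beyond careful complex arithmetic.
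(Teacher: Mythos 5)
Your proposal is correct and follows essentially the same route as the paper, which simply asserts that the identity follows ``in a rather straightforward, albeit tedious manner'' by substituting the complex values of the $\alpha_i$; you have carried out that computation explicitly, and all ten inner products ($|\alpha_0|^2=|\alpha_2|^2=4+2\sqrt{2}$, $|\alpha_1|^2=|\alpha_3|^2=4-2\sqrt{2}$, the four vanishing cross terms, and $\mathrm{Re}(\alpha_0\overline{\alpha_3})=-2\sqrt{2}$, $\mathrm{Re}(\alpha_1\overline{\alpha_2})=2\sqrt{2}$) check out, as does the final rescaling by $\sqrt{2}/4$.
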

\begin{proof}
By replacing $\alpha_i,\zeta$ by their complex representations, the corollary follows in
 a rather straightforward, albeit tedious manner.
 \qed
\end{proof}
\begin{thm}
Let $f\in\cGB_n^8$ as in \eqref{f_z8}. Then:
\begin{enumerate}
\item[$(i)$] If $n$ is even, then $f$ is generalized bent if and only if
 $a_2,a_0\oplus a_2,a_1\oplus a_2,a_0\oplus a_1\oplus a_2$ are all bent, and
 $(*)$ $W_{a_0\oplus a_2}(\uu) W_{a_1\oplus a_2}(\uu)=W_{a_2}(\uu)W_{a_0\oplus a_1\oplus a_2}(\uu)$, for all $\uu\in\BBZ_2^n$;
 \item[$(ii)$] If $n$ is odd, then $f$ is generalized bent if and only if
$a_2,a_0\oplus a_2,a_1\oplus a_2,a_0\oplus a_1\oplus a_2$ are semi-bent with their values satisfying $(*)$.
\end{enumerate}
\end{thm}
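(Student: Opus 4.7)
The plan hinges on Corollary~\ref{cor-norm}. Since $f$ is generalized bent exactly when $|\cH_f(\uu)|^2=1$ for every $\uu$, substituting this into \eqref{eq-norm} gives, for all $\uu$,
\[
(W^2+Y^2-X^2-Z^2+2XY-2WZ)+\sqrt{2}\,(W^2+X^2+Y^2+Z^2-4)=0.
\]
Every Walsh--Hadamard coefficient $W_g(\uu)$ has the form (integer)$/2^{n/2}$, so $W_g(\uu)^2$ is (integer)$/2^n$ and the cross products $WZ=W_{a_2}(\uu)\,W_{a_0\oplus a_1\oplus a_2}(\uu)$ and $XY=W_{a_0\oplus a_2}(\uu)\,W_{a_1\oplus a_2}(\uu)$ are likewise rational. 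Since $\sqrt{2}\notin\BBQ$, the identity splits into
\[
(A)\ W^2+X^2+Y^2+Z^2=4,\qquad (B)\ W^2+Y^2-X^2-Z^2+2(XY-WZ)=0,
\]
both required at every $\uu\in\BBZ_2^n$; thus gbent-ness is equivalent to $(A)\wedge(B)$.

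The sufficient directions are immediate. For (i), bentness of the four components yields $W^2=X^2=Y^2=Z^2=1$ pointwise, so $(A)$ is automatic and $(B)$ collapses to $XY=WZ$, which is exactly $(*)$. For (ii), semi-bentness forces each squared Walsh coefficient into $\{0,2\}$, so $(A)$ allows exactly two of $W,X,Y,Z$ to equal $\pm\sqrt{2}$ and two to vanish at each $\uu$; a direct enumeration of the six possible nonzero pairings against $(B)$ leaves only $\{W,X\}$ and $\{Y,Z\}$, each of which makes $(*)$ hold trivially.

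For the converse, I would substitute $W=k_W/2^{n/2}$, etc., so that $(A)$ reads $k_W^2+k_X^2+k_Y^2+k_Z^2=2^{n+2}$ and $(B)$ is the matching integer quadratic identity. The hard part will be a $2$-adic descent: reducing $(A)$ modulo $8$ rules out any parity configuration containing an odd $k_i$, forcing all four to be even, and when they are in addition all divisible by $4$ the substitution $k_i=2k'_i$ returns the same pair of equations with $n$ replaced by $n-2$. Iteration terminates at $a^2+b^2+c^2+d^2=4$ when $n$ is even, or at $a^2+b^2+c^2+d^2=8$ when $n$ is odd. In the first base case the only integer solutions are the uniform $(\pm1,\pm1,\pm1,\pm1)$, for which $(B)$ is exactly $(*)$, and the spike $(\pm2,0,0,0)$-type, which violates $(B)$ in every slot; in the second, the only integer solutions are $(\pm2,\pm2,0,0)$-type, and $(B)$ selects precisely the placements corresponding to $\{W,X\}$ or $\{Y,Z\}$ being nonzero. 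Unwinding the descent lifts these base-level conclusions into the bent (respectively semi-bent) property of the four component functions at every $\uu$, together with $(*)$.
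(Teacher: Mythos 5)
Your proposal is correct and its skeleton is the same as the paper's: both start from Corollary~\ref{cor-norm}, both use the irrationality of $\sqrt{2}$ to split $|\cH_f(\uu)|^2=1$ into the rational pair $(A)$ $W^2+X^2+Y^2+Z^2=4$ and $(B)$ $W^2-X^2+2XY+Y^2-2WZ-Z^2=0$, and both finish by classifying the solutions of $(A)$ in $2^{-n/2}\,\BBZ$ and checking them against $(B)$. The one genuine divergence is the number-theoretic step: the paper clears denominators and quotes Jacobi's four-squares theorem \cite{Hi87} to get exactly $24$ integer solutions of $k_W^2+k_X^2+k_Y^2+k_Z^2=2^{n+2}$, all of shape $(\pm 2^k,\pm 2^k,\pm 2^k,\pm 2^k)$ or a signed permutation of $(2^{k+1},0,0,0)$ (resp.\ of $(2^{k+1},2^{k+1},0,0)$ for $n$ odd), whereas you reach the same list by an elementary $2$-adic descent: a sum of four squares that is $\equiv 0 \pmod 8$ must have all summands even (four odd squares sum to $4 \bmod 8$, two odd ones give $2$ or $6$), so one halves repeatedly down to the base cases $4$ and $8$. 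Your route is self-contained and avoids the citation, at the cost of a few extra lines; the endgame (the spike solutions violate $(B)$, and only the all-$\pm1$ tuples with $XY=WZ$, resp.\ the nonzero pairings $\{W,X\}$ and $\{Y,Z\}$, survive) is identical in both. Two small blemishes: ``all divisible by $4$'' should read ``all even'' --- that is what the mod-$8$ step delivers and all that the substitution $k_i=2k_i'$ requires; and in the sufficiency direction of $(ii)$ your wording, like the paper's, tacitly reads the hypothesis as asserting that $(W,X,Y,Z)$ is one of the eight admissible tuples at each $\uu$, since semi-bentness of the four components together with $(*)$ alone would not exclude, say, all four coefficients vanishing at some point; condition $(A)$ must there be part of what is assumed rather than something derived.
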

\begin{proof}
We use the $W,X,Y,Z$ notations of Corollary~\ref{cor-norm}.
First, assume that $a_2,a_0\oplus a_2,a_1\oplus a_2,a_0\oplus a_1\oplus a_2$ are all bent (respectively, semi-bent).
  Then, replacing the corresponding values of the Walsh--Hadamard transforms in equation~\eqref{eq-norm}
  (and using the imposed condition~(*) on the Walsh--Hadamard coefficients) we obtain
\[
4\sqrt{2}|\cH_f(\uu)|^2=4\sqrt{2},
\]
and so, $|\cH_f(\uu)|=1$, that is, $f$ is gbent.

Conversely, we assume that $f$ is gbent, and so,
\[
4\sqrt{2}=W^2-X^2+2XY+Y^2-2WZ-Z^2+\sqrt{2}(W^2+X^2+Y^2+Z^2),
\]
which prompts the system
\begin{eqnarray}
&&W^2-X^2+2XY+Y^2-2WZ-Z^2=0\label{sys1}\\
&&W^2+X^2+Y^2+Z^2=4.\label{sys2}
\end{eqnarray}
We are looking for solutions in  $2^{-n/2}\,\BBZ$ (a subset of $\BBQ$, if $n$ is even or $\sqrt{2}\,\BBQ$, if $n$ is odd).

 We look at equation \eqref{sys2}, initially, and apply Jacobi's four squares theorem (see \cite{Hi87}, for instance). \newline
 {\em Case $(i)$.} Let $n=2k$ be even.
  Thus, $W,X,Y,Z$ are all rational (certainly, not all 0).
  Write $W=2^{-n/2}W',X=2^{-n/2}X',Y=2^{-n/2}Y',Z=2^{-n/2}Z'$, and replace \eqref{sys1} and \eqref{sys2} by
  the system in integers
  \begin{eqnarray}
&&W'^2-X'^2+2X'Y'+Y'^2-2W'Z'-Z'^2=0\label{sys1-int}\\
&&W'^2+X'^2+Y'^2+Z'^2=2^{2k+2}.\label{sys2-int}
\end{eqnarray}
Now, by  Jacobi's four-squares theorem, we know there are exactly 24 solutions of \eqref{sys2-int},
 which are all variations in $\pm$ sign and order of
$(\pm 2^k,\pm 2^k,\pm 2^k, \pm 2^k)$ or $(\pm 2^{k+1}, 0,0,0)$. Further, it is straightforward to check
that  among these 24 solutions, only the  eight tuples $(X',Y',W',Z')$ in the list below
are also satisfying equation \eqref{sys1-int},
\begin{equation*}
\label{sol1}
\begin{split}
&(-2^k,  -2^k,  -2^k,  -2^k), (2^k,  2^k,  -2^k,  -2^k), (-2^k,  -2^k,  2^k,  2^k), (-2^k,  2^k,  -2^k,  2^k), \\
& (2^k,  -2^k,  -2^k,  2^k), (-2^k,  2^k,  2^k,  -2^k), (2^k,  -2^k,  2^k,  -2^k),(2^k,  2^k,  2^k,   2^k).
\end{split}
\end{equation*}
This  implies  that $(X,Y,W,Z)\in 2^{-n/2}\BBZ^4$ are any of
the following:
\begin{equation}
\label{sol-bent}
\begin{split}
&(-1,  -1,  -1,  -1), (1,  1,  -1,  -1), (-1,  -1,  1,  1), (-1,  1,  -1,  1), \\
& (1,  -1,  -1,  1), (-1,  1,  1,  -1), (1,  -1,  1,  -1),(1,  1,  1,   1),
\end{split}
\end{equation}
and $(i)$ is shown  (one can check easily that these solutions also satisfy condition~$(*)$).\newline
 {\em Case $(ii)$.} Let $n=2k+1$ be odd. Then, at least one of $X,Y,W,Z$ is nonzero and belongs to $\sqrt{2}\,\BBQ$).
  As before, write $W=2^{-n/2}W',X=2^{-n/2}X',Y=2^{-n/2}Y',Z=2^{-n/2}Z'$, and replace \eqref{sys1} and \eqref{sys2} by
  the system in integers
  \begin{eqnarray}
&&W'^2-X'^2+2X'Y'+Y'^2-2W'Z'-Z'^2=0\label{sys1-sqrt}\\
&&W'^2+X'^2+Y'^2+Z'^2=2\cdot 2^{2k+2},\label{sys2-sqrt}
\end{eqnarray}
and so, by  Jacobi's four-squares theorem, equation \eqref{sys2-sqrt} has exactly 24 solutions,
 which are all variations in $\pm$ sign and order of
$(\pm 2^{k+1},\pm 2^{k+1},0, 0)$. Further, it is straightforward to check
that  among these 24 solutions, the  eight tuples $(X',Y',W',Z')$ in the list below
are also satisfying equation \eqref{sys1-sqrt},
\begin{equation*}
\label{sol-semibent-all}
\begin{split}
& ( 0, 2^{k+1},  0, 2^{k+1}), ( 0, 2^{k+1},  0, -2^{k+1}), ( 0, -2^{k+1},  0, 2^{k+1}),( 0, -2^{k+1},  0,-2^{k+1})\\
& (2^{k+1},  0, 2^{k+1},0), ( 2^{k+1},  0, -2^{k+1},0,(-2^{k+1},  0, 2^{k+1},0),(-2^{k+1},  0, -2^{k+1},0).
\end{split}
\end{equation*}
Thus, the solutions $(X,Y,W,Z)$ to \eqref{sys1} and \eqref{sys2} are
\begin{equation*}
\label{sol-semibent}
\begin{split}
& (0, \sqrt{2},  0, \sqrt{2}), (0, \sqrt{2},  0, -\sqrt{2}), ( 0,-\sqrt{2},  0,  \sqrt{2}), (0, -\sqrt{2}, 0, -\sqrt{2}),\\
& (\sqrt{2}, 0, \sqrt{2},  0), (\sqrt{2}, 0, -\sqrt{2},  0), (-\sqrt{2}, 0, \sqrt{2},  0), (-\sqrt{2}, 0, -\sqrt{2},  0),
 \end{split}
\end{equation*}
which also satisfy condition~(*), and $(ii)$ is shown.
\qed
\end{proof}

\section{Constructions of generalized bent functions in $\BBZ_8$}

In this section we define several classes of generalized bent Boolean functions.
\begin{thm}
\label{thm:1st-construction}
If $f:\BBZ_2^{n+2}\to \BBZ_8$ ($n$ even) is given by
\[
f(\xx,y,z)=4 c(\xx)+(4a(\xx)+2c(\xx)+1) y+(4b(\xx)+2c(\xx)+1)z-2yz,
\]
 where $a,b,c\in\cB_n$ such that all $a,b,c$, $a\oplus c$, $b\oplus c$ and $a\+b$ are bent
satisfying
\begin{equation}
\label{eq-wc}
W_a(\xx)W_b(\xx)+W_{a\+c}(\xx)W_{b\+c}(\xx)=-2W_{a\+b}(\xx)W_c(\xx)), \text{ for all } \xx\in\BBZ_2^n,
\end{equation}
 then $f$ is gbent in $\cGB_{n+2}^8$.
\end{thm}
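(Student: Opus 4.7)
The plan is to compute $\cH_f(\uu,v,w)$ directly by peeling off the last two coordinates. Splitting $\BBZ_2^{n+2}=\BBZ_2^n\times\BBZ_2^2$ and putting the $(y,z)$-sum on the outside, the first step is to evaluate $f(\xx,y,z)\bmod 8$ in the four cases: $f(\xx,0,0)=4c(\xx)$, $f(\xx,1,0)=6c(\xx)+4a(\xx)+1$, $f(\xx,0,1)=6c(\xx)+4b(\xx)+1$, and $f(\xx,1,1)=4a(\xx)+4b(\xx)$. Using $\zeta^4=-1$ and $\zeta^6=-\imath$, the integrand $\zeta^{f(\xx,y,z)}$ becomes, respectively, $(-1)^{c(\xx)}$, $\zeta(-\imath)^{c(\xx)}(-1)^{a(\xx)}$, $\zeta(-\imath)^{c(\xx)}(-1)^{b(\xx)}$, and $(-1)^{a(\xx)\+b(\xx)}$.

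Next I would linearise the factor $\zeta(-\imath)^{c(\xx)}$ by applying identity \eqref{eq-zs} with $z=-\imath$ together with $\zeta=(1+\imath)/\sqrt{2}$; a quick simplification gives $\zeta(-\imath)^{c(\xx)}=\frac{1}{\sqrt 2}\bigl(1+\imath(-1)^{c(\xx)}\bigr)$. Performing the four inner $\xx$-sums then produces elementary Walsh--Hadamard values of $c$, $a$, $b$, $a\+b$, $a\+c$, $b\+c$, and combining them with the outer weights $(-1)^{vy+wz}$ yields
\[
2\cH_f(\uu,v,w)=\bigl[W_c(\uu)+(-1)^{v+w}W_{a\+b}(\uu)\bigr]+\tfrac{(-1)^v}{\sqrt 2}\bigl[W_a(\uu)+\imath W_{a\+c}(\uu)\bigr]+\tfrac{(-1)^w}{\sqrt 2}\bigl[W_b(\uu)+\imath W_{b\+c}(\uu)\bigr].
\]
Under the bentness hypothesis every Walsh value on the right is a sign; writing $\epsilon_g:=W_g(\uu)\in\{\pm 1\}$ and separating real and imaginary parts of $2\cH_f$ before squaring produces
\[
|2\cH_f(\uu,v,w)|^2=4+(-1)^{v+w}\bigl(2\epsilon_c\epsilon_{a\+b}+\epsilon_a\epsilon_b+\epsilon_{a\+c}\epsilon_{b\+c}\bigr)+\sqrt{2}\,T,
\]
where $T=(-1)^v(\epsilon_a\epsilon_c+\epsilon_b\epsilon_{a\+b})+(-1)^w(\epsilon_b\epsilon_c+\epsilon_a\epsilon_{a\+b})$.

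The $(-1)^{v+w}$-bracket cancels on the nose by hypothesis \eqref{eq-wc}. The one step I expect to be less transparent, and the main obstacle, is the vanishing of $T$, because it is not literally an assumption. I would extract it from \eqref{eq-wc} by a short sign analysis: since each $\epsilon_g=\pm 1$, the left-hand side of \eqref{eq-wc} lies in $\{-2,0,2\}$ while the right-hand side is $\pm 2$, forcing $\epsilon_a\epsilon_b=-\epsilon_c\epsilon_{a\+b}$ and hence $\epsilon_a\epsilon_b\epsilon_c\epsilon_{a\+b}=-1$. In each of the two parentheses of $T$ the product of the two $\pm 1$ summands equals $\epsilon_a\epsilon_b\epsilon_c\epsilon_{a\+b}=-1$, and two signs whose product is $-1$ must sum to zero; hence $T\equiv 0$. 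This gives $|\cH_f(\uu,v,w)|=1$ for every $(\uu,v,w)\in\BBZ_2^{n+2}$, which is exactly the gbent condition.
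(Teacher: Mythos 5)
Your proposal is correct and follows essentially the same route as the paper: the same expression for $2\cH_f(\uu,v,w)$ in terms of $W_c$, $W_{a\+b}$, $W_a+\imath W_{a\+c}$, $W_b+\imath W_{b\+c}$, the same expansion of $|2\cH_f|^2$ into a constant $4$ plus a $(-1)^{v+w}$-term killed by \eqref{eq-wc} and a $\sqrt{2}$-term. Your sign analysis forcing $\epsilon_a\epsilon_b\epsilon_c\epsilon_{a\+b}=-1$ and hence $T=0$ is exactly the paper's (parenthetical) argument, just stated more explicitly.
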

\begin{proof}
We compute the Walsh--Hadamard coefficients (using the fact that $\zeta=\frac{1}{\sqrt{2}}(1+\imath)$ and $\zeta^2=\imath$)
\begin{equation}
\begin{split}
&2^{(n+2)/2} \cH_f(\uu,v,w)
=\sum_{(\xx,y,z)\in\BBZ_2^{n+2}} \zeta^{f(\xx,y,z)} (-1)^{\uu\cdot \xx\+vy\+wz}\\
&= \sum_{\xx\in \BBZ_2^n} \zeta^{4c(\xx)} (-1)^{\uu\cdot \xx}\\
&\quad\cdot \sum_{(y,z)\in\BBZ_2^2}
\zeta^{(4a(\xx)+2c(\xx)+1) y+(4b(\xx)+2c(\xx)+1)z-2yz} (-1)^{vy\+wz}\\
&= \sum_{\xx\in \BBZ_2^n}   (-1)^{c(\xx)\oplus\uu\cdot \xx} \cdot
\left(1+ (-1)^{v}(-1)^{a(\xx)}\imath^{c(\xx)} \zeta\right.\\
&\qquad\qquad \left.+(-1)^{w}(-1)^{b(\xx)}\imath^{c(\xx)} \zeta +
 (-1)^{a(\xx)\oplus b(\xx)\+c(\xx)\+v\+w} \right).
\end{split}
\end{equation}
Applying equation~\eqref{eq-zs} with $(z,s)=(\imath ,c(\xx))$, that is,
\begin{eqnarray*}
&& i^{c(\xx)}=\frac{1+(-1)^{c(\xx)}}{2}+\frac{1-(-1)^{c(\xx)}}{2}\imath,
\end{eqnarray*}
we obtain
\begin{eqnarray*}
&&2\cH_f(\uu,v,w)=W_c(\uu)+\frac{(-1)^v \zeta }{2} \left(W_{a\+c}(\uu)+W_{a}(\uu)+\imath W_{a\+c}(\uu)-\imath W_{a}(\uu)\right)\\
&&\quad+ \frac{(-1)^w \zeta}{2} \left(W_{b\+c}(\uu)+W_{b}(\uu)+\imath W_{b\+c}(\uu)-\imath W_{b}(\uu)\right)
+ (-1)^{v\+w} W_{a\+b}(\uu)\\
&&\quad= W_c(\uu)+\frac{(-1)^v}{\sqrt{2}} (W_{a}(\uu)+\imath W_{a\+c}(\uu))+
\frac{(-1)^w}{\sqrt{2}} (W_{b}(\uu)+\imath W_{b\+c}(\uu))\\
&&\qquad\qquad\qquad + (-1)^{v\+w} W_{a\+b}(\uu).
   \end{eqnarray*}

Therefore, the real and the imaginary parts of $cH_f(\uu,v,w)$ are
\begin{eqnarray*}
{Re}(\cH_f(\uu,v,w))&=&  W_c(\uu)+(-1)^{v\+w} W_{a\+b}(\uu)+ \frac{(-1)^vW_{a}(\uu) +(-1)^w W_{b}(\uu)}{\sqrt{2}},\\
{Im}(\cH_f(\uu,v,w))&=& \frac{(-1)^v W_{a\+c}(\uu) +(-1)^w W_{b\+c}(\uu)}{\sqrt{2}}.
   \end{eqnarray*}
   and so,
   {\small
   \begin{equation}
   \begin{split}
   \label{eq-4H}
&4 |\cH_f(\uu,v,w)|^2
= \frac{1}{2}\left(W_{a}(\uu)^2+W_{b}(\uu)^2+W_{a\+c}(\uu)^2+W_{b\+c}(\uu)^2\right.\\
&\qquad\qquad \left.+2 W_c(\uu)^2+2W_{a\+b}(\uu)^2\right)\\
&\qquad+(-1)^{v+w}(W_{a}(\uu) W_{b}(\uu)+W_{a\+c}(\uu) W_{b\+c}(\uu)+2W_{c}(\uu)W_{a\+b}(\uu))\\
&\qquad+\sqrt{2}\left((-1)^v(W_a(\uu)W_c(\uu)+
W_b(\uu)W_{a\+b}(\uu))\right.\\
&\qquad \left. +(-1)^w (W_b(\uu) W_c(\uu)+W_a(\uu)W_{a\+b}(\uu))   \right)
\end{split}
\end{equation}
}
\begin{sloppypar}
Since $a,b,c,a\+c,b\+c,a\+b$ are all bent then
 $|W_{a}(\uu)|=|W_{b}(\uu)|=|W_{c}(\uu)|=|W_{a\+b}(\uu)|=|W_{a\+c}(\uu)|=|W_{b\+c}(\uu)|=1$. Further, from the imposed conditions
 on these functions' Walsh--Hadamard coefficients, we see that
 $W_{a}(\uu) W_{b}(\uu)+W_{a\+c}(\uu) W_{b\+c}(\uu)+2W_{c}(\uu)W_{a\+b}(\uu)=0$, and also
 $W_a(\uu)W_c(\uu)+W_b(\uu)W_{a\+b}(\uu)=0$, $W_b(\uu) W_c(\uu)+W_a(\uu)W_{a\+b}(\uu)=0$
 (that is because if $W_a(\uu)$ and $W_b(\uu)$ have the same sign, then $W_c(\uu), W_{a\+b}$ have opposite signs; further,
 $W_a(\uu)$ and $W_b(\uu)$ have opposite signs, then $W_c(\uu), W_{a\+b}$ have the same sign).
 Using these equations, we get that
 $
 4 |\cH_f(\uu,v,w)|^2=4,
 $
 and so, $f$ is gbent.
 \end{sloppypar}
 \qed
\end{proof}

 \begin{sloppypar}
 \begin{rem}
 It is rather straightforward to see that condition \eqref{eq-wc} has $16$ solutions. More precisely,
 $(W_a(\xx),W_b(\xx),W_{a\+c}(\xx),W_{b\+c},W_{a\+b}(\xx),W_c(\xx))$ could be any of the
 following tuples:
 \[
 \begin{array}{ll}
  (-1,-1,-1,-1,-1,1); & (-1,-1,-1,-1,1,-1);\\
   (-1,1,1,1,-1,1); &  (-1,-1,1,1,1,-1);\\
  (-1,1,-1,1,-1,-1); & (-1,1,-1,1,1,1);\\
 (-1,1,1,-1,1,-1); & (-1,1,1,-1,1,1); \\
 (1,-1,-1,1,-1,-1); & (1,-1,-1,1,1,1);\\
 (1,-1,1,-1,1,-1); & (1,-1,1,-1,1,1);\\
 (1,1,-1,-1,-1,1); & (1,1,-1,-1,1,-1);\\
  (1,1,1,1,-1,1); & (1,1,1,1,1,-1).
   \end{array}
   \]
 \end{rem}
  \end{sloppypar}

\begin{thm}
If $f:\BBZ_2^{n+2}\to \BBZ_8$ ($n$ even) is given by
\begin{equation}
\label{eq:constr2}
f^{\epsilon}(\xx,y,z)=4 c(\xx)+(4a(\xx)+1) y+(4b(\xx) +1)z+2\epsilon yz,
\end{equation}
 where $\epsilon\in\{1,-1\}$, $a,b,c\in\cB_n$ such that all $c$, $a\oplus c$, $b\oplus c$ and $a\+b\+c$ are bent,
 with
 \begin{equation}
 \label{eq-cond-C2}
 \begin{split}
& W_{a\+c}(\uu) W_{b\+c}(\uu)+\epsilon W_{c}(\uu)W_{a\+b\+c}(\uu)=0,\text{ for all } \uu\in\BBZ_2^n,
 \end{split}
 \end{equation}
 then $f$ is gbent in $\cGB_{n+2}^8$.
\end{thm}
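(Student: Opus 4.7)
The plan is to mirror the proof of Theorem~\ref{thm:1st-construction} almost verbatim, with the four Walsh--Hadamard coefficients $W_c(\uu), W_{a\+c}(\uu), W_{b\+c}(\uu), W_{a\+b\+c}(\uu)$ playing the role of the six that appeared there. First I would compute $\cH_{f^\epsilon}(\uu,v,w)$ directly. Since $\zeta^4=-1$ and $\zeta^2=\imath$, the exponent $4c(\xx)$ contributes a factor $(-1)^{c(\xx)}$ that is independent of $(y,z)$ and may be pulled out of the inner $2\times 2$ sum over $(y,z)\in\BBZ_2^2$. Expanding each of the four cases (the $(y,z)=(1,1)$ case produces the scalar $\imath^{1+\epsilon}\in\{\pm 1\}$ from $\zeta^{2+2\epsilon}$) and summing over $\xx$ collapses the four Boolean sums into Walsh--Hadamard coefficients, giving
\[
2\cH_{f^\epsilon}(\uu,v,w)=W_c(\uu)+(-1)^v\zeta\,W_{a\+c}(\uu)+(-1)^w\zeta\,W_{b\+c}(\uu)+\eta(-1)^{v+w}W_{a\+b\+c}(\uu),
\]
with $\eta:=\imath^{1+\epsilon}\in\{\pm 1\}$.

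Writing $\zeta=(1+\imath)/\sqrt{2}$ and splitting into real and imaginary parts, I would then form $4|\cH_{f^\epsilon}|^2$. After routine expansion this takes the schematic form
\[
\sum_{g\in\{c,a\+c,b\+c,a\+b\+c\}}\!\!W_g(\uu)^2+2(-1)^{v+w}\bigl(W_{a\+c}(\uu)W_{b\+c}(\uu)+\eta\,W_c(\uu)W_{a\+b\+c}(\uu)\bigr)+\sqrt 2\bigl((-1)^v\Sigma_1+(-1)^w\Sigma_2\bigr),
\]
where $\Sigma_1$ and $\Sigma_2$ are each a sum of two products of pairs among the four Walsh values. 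The bentness of $c,a\+c,b\+c,a\+b\+c$ yields $\sum_g W_g(\uu)^2=4$ at once, and the hypothesis~\eqref{eq-cond-C2} (matched to the sign of $\eta$) kills the $(-1)^{v+w}$ term.

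The delicate step, and the main obstacle of the proof, will be to show that $\Sigma_1=\Sigma_2=0$ automatically. Because $|W_g(\uu)|=1$ for every $g\in\{c,a\+c,b\+c,a\+b\+c\}$, the relation $W_{a\+c}(\uu)W_{b\+c}(\uu)=-\epsilon\,W_c(\uu)W_{a\+b\+c}(\uu)$ from~\eqref{eq-cond-C2} is equivalent to the single product identity $W_c(\uu)W_{a\+c}(\uu)W_{b\+c}(\uu)W_{a\+b\+c}(\uu)=-\epsilon$. A short case analysis on the sign patterns of the four coefficients, identical in spirit to the ``same sign / opposite sign'' argument used at the end of the proof of Theorem~\ref{thm:1st-construction}, then shows that each of $\Sigma_1,\Sigma_2$ collapses to a sum of the form $P+(-1)P=0$. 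Collecting everything yields $4|\cH_{f^\epsilon}(\uu,v,w)|^2=4$ for every $(\uu,v,w)\in\BBZ_2^{n+2}$, hence $f^\epsilon$ is gbent.
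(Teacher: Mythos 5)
Your overall route is exactly the paper's: evaluate the inner sum over $(y,z)$ to get
\[
2\cH_{f^{\epsilon}}(\uu,v,w)=W_c(\uu)+(-1)^v\zeta W_{a\+c}(\uu)+(-1)^w\zeta W_{b\+c}(\uu)+\zeta^{2+2\epsilon}(-1)^{v+w}W_{a\+b\+c}(\uu),
\]
expand $4|\cH_{f^{\epsilon}}(\uu,v,w)|^2$ into the four squares plus three cross terms, let bentness handle the squares, and derive the vanishing of the two $\sqrt{2}$-terms from the single quadratic relation; your ``product identity'' argument is literally the paper's remark that for $A,B,C,D\in\{\pm1\}$ the relation $AB+CD=0$ forces $AC+BD=0$ and $BC+AD=0$. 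So there is no structural difference.

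However, the parenthetical ``(matched to the sign of $\eta$)'' is hiding a step that actually fails. You correctly find $\eta=\imath^{1+\epsilon}=-\epsilon$, so the coefficient of $2(-1)^{v+w}$ in your expansion is $W_{a\+c}(\uu)W_{b\+c}(\uu)-\epsilon W_c(\uu)W_{a\+b\+c}(\uu)$, while hypothesis~\eqref{eq-cond-C2} annihilates $W_{a\+c}(\uu)W_{b\+c}(\uu)+\epsilon W_c(\uu)W_{a\+b\+c}(\uu)$. Since all four Walsh values are $\pm1$, these two expressions cannot vanish simultaneously: if the second is $0$, the first equals $\pm2$, and likewise your $\Sigma_1,\Sigma_2$ become $\pm2$ rather than $0$; no case analysis rescues this. (The paper's own proof only closes because its displayed formula for $4|\cH_{f^{\epsilon}}|^2$ writes $+\epsilon$ where expanding the square of $W_c(\uu)-\epsilon(-1)^{v+w}W_{a\+b\+c}(\uu)$ produces $-\epsilon$; with the hypothesis as printed the theorem is in fact false --- take $\epsilon=-1$, $a=b$, with $c$ and $a\+c$ bent, which satisfies~\eqref{eq-cond-C2} yet yields $|\cH_{f^{-1}}(\uu,0,0)|^2=2+\sqrt{2}\,W_c(\uu)W_{a\+c}(\uu)$.) The fix is to replace~\eqref{eq-cond-C2} by $W_{a\+c}(\uu)W_{b\+c}(\uu)-\epsilon W_c(\uu)W_{a\+b\+c}(\uu)=0$, equivalently $W_c(\uu)W_{a\+c}(\uu)W_{b\+c}(\uu)W_{a\+b\+c}(\uu)=\epsilon$; with that hypothesis your outline, including the $\Sigma_1=\Sigma_2=0$ step, goes through verbatim.
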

\begin{proof}
As in the proof of Theorem~\ref{thm:1st-construction}, we compute the Walsh--Hadamard coefficients, obtaining
\begin{eqnarray*}
2\cH_{f^{\epsilon}}(\uu,v,w)
&=& W_c(\uu)+(-1)^v \zeta W_{a\+c}(\uu)+(-1)^w \zeta W_{b\+c}(\uu)\\
&&\qquad\qquad +(-1)^{v\+w}\zeta^{2+2\epsilon} W_{a\+b\+c}(\uu)\\
&=& W_c(\uu)-\epsilon (-1)^{v\+w} W_{a\+b\+c}(\uu)\\
&&\quad+\frac{(-1)^v W_{a\+c}(\uu)+ (-1)^w  W_{b\+c}(\uu)}{\sqrt{2}}\\
&&\quad+
\imath\frac{(-1)^v W_{a\+c}(\uu)+ (-1)^w  W_{b\+c}(\uu)}{\sqrt{2}},
\end{eqnarray*}
using the fact that $\zeta^{2+2\epsilon}=-\epsilon$, for $\epsilon\in\{1,-1\}$.
Taking the square of the complex norm, we get
\begin{eqnarray*}
&& 4 |\cH_{f^{\epsilon}}(\uu,v,w)|^2=W_{a\+c}(\uu)^2+W_{b\+c}(\uu)^2+W_c(\uu)^2+W_{a\+b\+c}(\uu)^2\\
&&\qquad \qquad\qquad\quad +2(-1)^{v+w}
\left(W_{a\+c}(\uu)W_{b\+c}(\uu)+\epsilon W_{c}(\uu) W_{a\+b\+c}(\uu)\right)\\
&&\qquad \qquad\qquad\quad +\sqrt{2}\left((-1)^v (W_{a\+c}(\uu) W_{c}(\uu)+\epsilon W_{b\+c}(\uu)W_{a\+b\+c}(\uu)) \right.\\
&&\qquad \qquad\qquad\quad \left.+
(-1)^w (W_{b\+c}(\uu)W_{c}(\uu)+ \epsilon W_{a\+c}(\uu)W_{a\+b\+c}(\uu)) \right)\\
&&\qquad \qquad\qquad\quad = 4,
\end{eqnarray*}
because  $c$, $a\oplus c$, $b\oplus c$ and $a\+b\+c$  are all bent, so their Walsh--Hadamard coefficients
are~1 in absolute values, and equation~\eqref{eq-cond-C2} implies that the remaining coefficients are all 0
(that can be seen by the following argument: if $A,B,C,D\in \{\pm 1\}$, and $AB+CD=0$, then by multiplying by $BC$, we get
$AC+BD=0$, and by multiplying by $AC$ we get $BC+AD=0$).

Therefore, $|\cH_{f^{\epsilon}}(\uu,v,w)|^2=1$, so $f$ is gbent, and the theorem is proved.
\qed
\end{proof}
\begin{rem}
It is rather easy to see that equation~\eqref{eq-cond-C2} has $8$ solutions
(as expected, since there are four degrees of freedom and one constraint). Moreover,
one can give plenty of concrete examples of functions $a,b,c$ satisfying the conditions of our theorem. For example,
if $\epsilon=-1$, one could take in equation~\eqref{eq:constr2}, a bent Boolean $c$, and $a=b$ such that $c\oplus a$ is bent (for instance,
if $a=b$  are affine functions, that condition is immediate). Then,
$W_{a\+c}(\uu)W_{b\+c}(\uu)+\epsilon W_{c}(\uu) W_{a\+b\+c}(\uu)=
W_{c\+a}(\uu)^2-W_c(\uu)^2=0$, and  so, $g$ as in our theorem is gbent.
\end{rem}

\begin{thm}
Let $f:\BBZ_2^{n+1}\to \BBZ_8$ ($n$ is even) be given by
\begin{equation}
\label{eq:constr3}
f(\xx,y)=4 c(\xx)+(4a(\xx) +4c(\xx)+2\epsilon) y,
\end{equation}
where $\epsilon\in\{1,-1\}$. Then $f$ is gbent in $\cGB_{n+1}^8$ if and only if $a,c$ are bent in $\cB_n$.
Moreover, if $g$ is given by
\begin{equation}
\label{eq:constr4}
g(\xx,y)=4 c(\xx)+(4a(\xx)+2c(\xx)+2\epsilon) y,
\end{equation}
 where $\epsilon\in\{1,-1\}$, $a, c\in\cB_n$ such that  $a,c$, $a\oplus c$ are all bent,
 then $g$ is gbent in $\cGB_{n+1}^8$. Further, let
  $h$ be given by
\begin{equation}
\label{eq:constr5}
h(\xx,y)=4 c(\xx)+(4a(\xx)+2\epsilon) y,
\end{equation}
 where $\epsilon\in\{1,-1\}$. Then  $h$ is gbent in $\cGB_{n+1}^8$ if and only if $c,a\+c$ are bent in $\cB_n$.
\end{thm}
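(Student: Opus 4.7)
The plan is to treat the three functions $f$, $g$, and $h$ by the same recipe used in the proof of Theorem~\ref{thm:1st-construction}: split the sum defining $\cH_\cdot(\uu,v)$ according to $y\in\{0,1\}$, simplify $\zeta^{(\cdot)(\xx,y)}$ on each slice using $\zeta^4=-1$, $\zeta^2=\imath$, and $\zeta^{2\epsilon}=\epsilon\imath$, apply identity~\eqref{eq-zs} to absorb any residual $\imath^{c(\xx)}$ factor, and collect terms into classical Walsh--Hadamard coefficients of the Boolean components $a$, $c$, and $a\oplus c$.

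For $f$, the slice $y=1$ collapses to $\epsilon\imath\,(-1)^{a(\xx)}$ because $4c+4c\equiv 0\pmod 8$, so I expect to obtain $\sqrt{2}\,\cH_f(\uu,v)=W_c(\uu)+\epsilon(-1)^v\imath\,W_a(\uu)$, whence $2|\cH_f(\uu,v)|^2=W_c(\uu)^2+W_a(\uu)^2$. The ``if'' direction of the equivalence is immediate from the bentness of $a$ and $c$. For the converse I set $W_a(\uu)=k_1/2^{n/2}$ and $W_c(\uu)=k_2/2^{n/2}$ with $k_1,k_2\in\BBZ$, reducing the gbent condition to $k_1^2+k_2^2=2^{n+1}$; a 2-adic descent---two squares whose sum is divisible by $8$ are necessarily both even, so pulling out a factor of $2$ iteratively $n/2$ times reduces the problem to $k_1'^2+k_2'^2=2$, whose only solutions are $(\pm 1,\pm 1)$---forces $|k_1|=|k_2|=2^{n/2}$, i.e.\ $a$ and $c$ are bent. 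The function $h$ is handled identically: its slice $y=1$ produces $\epsilon\imath\,(-1)^{a(\xx)\oplus c(\xx)}$, which merely replaces $W_a$ by $W_{a\oplus c}$ throughout, yielding the iff-statement with $c$ and $a\oplus c$ bent.

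For $g$, the slice $y=1$ simplifies to $\epsilon(-1)^{a(\xx)}\imath^{1-c(\xx)}$; applying~\eqref{eq-zs} to $\imath^{1-c(\xx)}$ and splitting the sum according to the value of $c(\xx)$ produces
\[
\sqrt{2}\,\cH_g(\uu,v)=W_c(\uu)+\tfrac{\epsilon(-1)^v}{2}\bigl[(W_a(\uu)-W_{a\oplus c}(\uu))+\imath\,(W_a(\uu)+W_{a\oplus c}(\uu))\bigr],
\]
and consequently
\[
2|\cH_g(\uu,v)|^2=W_c(\uu)^2+\tfrac{1}{2}\bigl(W_a(\uu)^2+W_{a\oplus c}(\uu)^2\bigr)+(-1)^v\epsilon\,W_c(\uu)\bigl(W_a(\uu)-W_{a\oplus c}(\uu)\bigr).
\]
Under the bentness of $a$, $c$, and $a\oplus c$ the first two contributions already sum to~$2$, and the residual cross term is made to vanish by the same sign bookkeeping on the triple $(W_a,W_{a\oplus c},W_c)\in\{\pm1\}^3$ used at the end of the proof of Theorem~\ref{thm:1st-construction}.

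The main obstacle is the 2-adic descent in the converse for the $f$ and $h$ parts: gbentness yields only the single scalar identity $W_c^2+W_\cdot^2=2$, and one must extract from it the much stronger statement that \emph{each} summand equals~$1$. This step genuinely uses the evenness of $n$ (for $n$ odd the analogous equation admits extra solutions in $2^{-n/2}\BBZ$, so the converse as stated would fail). Everything else reduces to mechanical manipulation of eighth roots of unity together with~\eqref{eq-zs}.
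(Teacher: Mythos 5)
Your treatment of $f$ and $h$ is correct and follows the same route as the paper, which in fact only writes out the computation for $f$ and declares the other two cases ``absolutely similar.'' Your 2-adic descent for the converse is a genuine improvement: the paper merely asserts that $W_c(\uu)^2+W_a(\uu)^2=2$ ``has as rational solutions only $|W_c(\uu)|=|W_a(\uu)|=1$,'' which is false for arbitrary rationals (e.g.\ $(1/5)^2+(7/5)^2=2$) and really needs the restriction to $2^{-n/2}\BBZ$ together with exactly the parity/descent argument you give. The observation that evenness of $n$ is what blocks the solutions $(\pm\sqrt2,0)$ is also correct and worth stating.

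The $g$ part, however, contains a genuine gap, and it is not one you can close. Your formula
$2|\cH_g(\uu,v)|^2=W_c(\uu)^2+\tfrac12\bigl(W_a(\uu)^2+W_{a\oplus c}(\uu)^2\bigr)+\epsilon(-1)^v W_c(\uu)\bigl(W_a(\uu)-W_{a\oplus c}(\uu)\bigr)$
is right (I have rechecked it), but the final claim that the cross term ``is made to vanish by the same sign bookkeeping'' as in Theorem~\ref{thm:1st-construction} is unjustified: there is no constraint in the hypotheses linking the signs of $W_a(\uu)$ and $W_{a\oplus c}(\uu)$, and the term vanishes at $\uu$ only if $W_a(\uu)=W_{a\oplus c}(\uu)$. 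Worse, this cannot hold at every $\uu$: since $C_{a,a\oplus c}(\mathbf{0})=\sum_{\xx}W_a(\xx)W_{a\oplus c}(\xx)$ on one hand and $C_{a,a\oplus c}(\mathbf{0})=\sum_{\yy}(-1)^{c(\yy)}=2^{n/2}W_c(\mathbf{0})$ on the other, having $W_a\equiv W_{a\oplus c}$ (both $\pm1$-valued) would force $W_c(\mathbf{0})=2^{n/2}$, contradicting the bentness of $c$ for $n\ge 2$. So at some $\uu$ the cross term equals $\pm 2$ and $2|\cH_g(\uu,v)|^2\in\{0,4\}$ for the two values of $v$. In other words, your (correct) computation shows that the stated hypotheses on $g$ do not yield gbentness; the ``absolutely similar'' proof the paper gestures at does not go through for $g$ because the $2c(\xx)y$ term produces an $\imath^{c(\xx)}$ factor that mixes $W_a$ and $W_{a\oplus c}$ asymmetrically, unlike in $f$ and $h$. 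You should flag this rather than paper over it: either an additional hypothesis is needed or the claim for $g$ must be amended.
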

\begin{proof}
We will show the first claim, since the proof of the remaining ones are absolutely similar.
As in the proof of Theorem~\ref{thm:1st-construction}, the Walsh--Hadamard coefficients at an arbitrary input $(\uu,v)$ are
\begin{eqnarray*}
\sqrt{2}\cH_f(\uu,v)=W_c(\uu)+\imath^\epsilon (-1)^v W_a(\uu)=
W_c(\uu)+\epsilon\, \imath (-1)^v W_a(\uu),
\end{eqnarray*}
and so,
\[
2|\cH_f(\uu,v)|^2=W_c(\uu)^2+W_a(\uu)^2.
\]
If $a,c$ are bent, then $|W_c(\uu)|=|W_a(\uu)|=1$, and so $|\cH_f(\uu,v)|=1$, that is $f$ is gbent.
If $f$ is gbent, then the equation $W_c(\uu)^2+W_a(\uu)^2=2$ has as rational solutions only $|W_c(\uu)|=|W_a(\uu)|=1$, and so, $a,c$ are bent.
\qed
\end{proof}


\end{document}